\documentclass[11pt]{amsart}
\usepackage{amssymb,amscd,amsthm,verbatim}
\usepackage{amsbsy}
\usepackage{latexsym}
\usepackage[all,cmtip]{xy}
\usepackage[mathscr]{euscript}

\usepackage{etex}
\usepackage{graphicx}
\usepackage{pictex}
\usepackage{epstopdf}
\usepackage{color}


\usepackage{bbm}

\date{\today}

\newcommand{\Z}{{\mathbb Z}}
\newcommand{\R}{{\mathbb R}}

\newcommand{\C}{{\mathbb C}}

\newcommand{\T}{{\mathbb T}}

\newcommand{\N}{{\mathbb N}}




\renewcommand{\Re}{{\mathrm{Re} \,}}













\newtheorem{theorem}{Theorem}[section]
\newtheorem{lemma}[theorem]{Lemma}
\newtheorem{prop}[theorem]{Proposition}
\newtheorem{coro}[theorem]{Corollary}

\theoremstyle{definition}

\newtheorem{remark}[theorem]{Remark}
\newtheorem{remarks}[theorem]{Remarks}
\theoremstyle{plain}

\sloppy

\allowdisplaybreaks \numberwithin{equation}{section}

\newcommand{\set}[1]{\left\{#1\right\}}

\begin{document}

\title[Quasi-Periodic Schr\"odinger Operators and Singular Spectrum]{Absence of Absolutely Continuous Spectrum for Generic Quasi-Periodic Schr\"odinger Operators on the Real Line}

\author[D.\ Damanik]{David Damanik}
\address{Department of Mathematics, Rice University, Houston, TX~77005, USA}
\email{damanik@rice.edu}
\thanks{D.D.\ was supported in part by NSF grant DMS--1700131.}

\author[D.\ Lenz]{Daniel Lenz}
\address{Institute for Mathematics, Friedrich-Schiller University, Jena, 07743 Jena}
\email{daniel.lenz@uni-jena.de}

\begin{abstract}
We show that a generic quasi-periodic Schr\"odinger operator in $L^2(\R)$ has purely singular spectrum. That is, for any minimal translation flow on a finite-dimensional torus, there is a residual set of continuous sampling functions such that for each of these sampling functions, the Schr\"odinger operator with the resulting potential has empty absolutely continuous spectrum.
\end{abstract}

\maketitle

\section{Introduction}

In this paper we consider Schr\"odinger operators
\begin{equation}\label{e.operator}
[H \psi](x) = -\psi''(x) + V(x) \psi(x)
\end{equation}
in $L^2(\R)$ with quasi-periodic potentials
\begin{equation}\label{e.potential}
V(x) = f( \omega + x \alpha ).
\end{equation}
Here, $\omega, \alpha \in \T^d = \R^d/\Z^d$ for some $d \in \Z_+$, $f \in C(\T^d)$ real-valued, and $x \in \R$. The case where $V$ is periodic is classical and well understood, and hence we will primarily focus on the aperiodic case. This necessarily means that $d \ge 2$ and it also places some restrictions on $\alpha$ and $f$. We will assume that $\alpha$ is such that the translation flow in question is minimal (i.e., all orbits are dense) to ensure that the torus dimension $d$ is chosen appropriately, and moreover $f$ needs to be non-constant to avoid periodicity.

The spectral properties of operator of the form \eqref{e.operator} with potentials of the form \eqref{e.potential} have been studied intensively since the 1980's, with many major advances occurring in the past two decades. Much of this work has been reviewed in several recent survey papers, including \cite{D07, D17, J07, JM17, K15}. We should point out, however, that some of these survey papers discuss the discrete analogs of these operators, which act in $\ell^2(\Z)$ as
$$
[H^{(d)} \psi](n) = \psi(n+1) + \psi(n-1) + V^{(d)}(n) \psi(n)
$$
with
$$
V^{(d)}(n) = f( \omega + n \alpha ),
$$
but many results exist in both settings.

There are, of course, some notable exceptions. One of the most important exceptions is that Avila's global theory for discrete one-frequency quasi-periodic Schr\"odinger operators with analytic sampling functions \cite{A15} does not yet have a continuum counterpart. In this paper we will address another result, which is known in the discrete case, but whose continuum counterpart is desirable to have because of recent progress on the Deift conjecture, which makes a connection with continuum quasi-periodic Schr\"odinger operators.

The Deift conjecture \cite{D08, De17} states that the KdV equation with almost periodic initial data admits global solutions that are almost periodic in (space and) time. The conjecture has been proved under suitable assumptions \cite{BDGL18+, EVY18+}. These results, and really their proofs, need that the initial data, when considered as potentials, give rise to Schr\"odinger operators with absolutely continuous spectrum. It was therefore pointed out in \cite{BDGL18+} that the assumptions will likely fail generically in a suitable sense.

Concretely, the abstract sufficient conditions for the Deift conjecture to hold have been verified for suitable classes of quasi-periodic functions of the form \eqref{e.potential}; see \cite{BDGL18+}. On the other hand, for \emph{discrete} quasi-periodic Schr\"odinger operators, it is known \cite{AD05} that a generic quasi-periodic potential will give rise to a Schr\"odinger operator with \emph{empty} absolutely continuous spectrum. One should therefore expect that also in the continuum case, which is the one relevant to the study of the KdV equation and the Deift conjecture, the absolutely continuous spectrum will be empty for a generic quasi-periodic potential.

\medskip

The purpose of this paper is to prove this statement:

\begin{theorem}\label{t.main1}
Given $d \ge 2$ and a minimal translation flow on $\T^d$, $\R \ni x \mapsto \omega + x \alpha \in \T^d$, there is a dense $G_\delta$-set $\mathcal{S} \subseteq C(\T^d)$ such that for every $f \in \mathcal{S}$, the Schr\"odinger operator in $L^2(\R)$ with potential $V(x) = f(\omega + x \alpha)$ has purely singular spectrum.
\end{theorem}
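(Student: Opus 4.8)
The plan is to follow the strategy used in the discrete setting by Avila--Damanik \cite{AD05}, adapting it to the continuum. The essential structure of the argument is a Baire category argument combining two ingredients: (i) a density statement, namely that potentials giving rise to operators with \emph{no} absolutely continuous spectrum are dense in $C(\T^d)$, and (ii) a $G_\delta$ statement, namely that the set of $f$ for which the ac spectrum is empty (or more precisely, for which suitable Lyapunov-exponent-type or transfer-matrix quantities behave badly) is a countable intersection of open sets. Since the intersection of a dense set with a dense $G_\delta$ is again a dense $G_\delta$ when the ambient space is a complete metric space (here $C(\T^d)$ with the sup norm is such), Theorem~\ref{t.main1} follows once both ingredients are in place. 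I record that ``purely singular spectrum'' is the same as ``empty absolutely continuous spectrum'' because the spectral measure always decomposes as $\mu = \mu_{\ac} + \mu_{\sing}$.

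For the density ingredient, I would exploit the fact that periodic potentials are dense in $C(\T^d)$ in a useful sense: one can approximate any minimal translation flow sampling by a sequence of rational rotations, so that for a given $f$ the resulting potential $V(x) = f(\omega + x\alpha)$ is approximated uniformly by \emph{periodic} potentials. Periodic Schr\"odinger operators have purely absolutely continuous spectrum, so this alone does not suffice; the point instead (as in \cite{AD05}) is to perturb slightly to destroy the ac spectrum. The mechanism is that a generic small perturbation of the sampling function, while keeping the flow fixed, produces a potential that is a limit of periodic ones whose spectral gaps do not close, which forces spectral thinness; alternatively one uses that limit-periodic-type or carefully chosen quasi-periodic perturbations can be made to have positive Lyapunov exponent (or to satisfy a Kotani-type exclusion) on a set of positive measure. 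Concretely I would build, near any given $f$ and any $\varepsilon > 0$, a sampling function $g$ with $\|f - g\|_\infty < \varepsilon$ such that the operator with potential $g(\omega + x\alpha)$ has empty ac spectrum --- this is the technical heart and I will borrow the trick from \cite{AD05} of inserting, on a long but sparse arithmetic progression of windows determined by the continued-fraction structure of $\alpha$, localized bumps that act as effective barriers, producing transfer matrices of large norm along a set of energies of full measure in the spectrum and invoking Kotani theory (the a.c.\ spectrum is supported on the set where the Lyapunov exponent vanishes).

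For the $G_\delta$ ingredient, the natural route is Kotani theory together with the Ishii--Pastur--Kotani characterization: the absolutely continuous spectrum of an ergodic Schr\"odinger operator is the essential closure of $\{E : L(E) = 0\}$, where $L(E)$ is the Lyapunov exponent of the continuum cocycle. Thus the ac spectrum is empty if and only if $L(E) > 0$ for Lebesgue-a.e.\ $E$. I would express the condition ``$\mathrm{Leb}\{E \in I : L_f(E) = 0\} = 0$ for every bounded interval $I$ with rational endpoints'' in terms of countably many quantities, using that $L_f(E)$ depends upper semicontinuously (in an appropriate averaged sense) on $f$ and on $E$ --- in fact one typically works with finite-scale approximants $\frac{1}{n}\int_{\T^d} \log\|M_n^{E,f}(\theta)\| \, d\theta$, which are continuous in $f$, and passes to the limit. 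Writing the ``bad'' set (empty ac spectrum) as $\bigcap_{I} \bigcup_{k} \{f : \text{the } k\text{-th finite-scale proxy exceeds } 1/k \text{ off a set of measure} < 1/k\}$ and checking openness of each piece using uniform continuity of the transfer matrices in $f$ on compact energy windows gives the $G_\delta$ structure.

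The main obstacle, as in the discrete case, will be the density step: unlike the abstract $G_\delta$ part, which is ``soft,'' producing explicit small perturbations of $f$ that genuinely kill the ac spectrum requires a quantitative construction. In the continuum one must be careful that the perturbation stays in $C(\T^d)$ (continuity on the torus, not merely along one orbit), that the inserted barriers are compatible with the fixed minimal flow, and that the Kotani/Lyapunov estimates survive the passage from the discrete transfer matrix formalism to the continuum one (where the cocycle is generated by an $\mathfrak{sl}(2,\R)$-valued ODE rather than a product of $\SL(2,\R)$ matrices). I expect the bulk of the paper to be devoted precisely to this perturbative construction and to verifying that it yields positivity of the Lyapunov exponent on a set of positive Lebesgue measure in energy, which then feeds into Kotani theory to conclude absence of absolutely continuous spectrum for the perturbed operator.
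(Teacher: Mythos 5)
Your Baire-category skeleton (dense plus $G_\delta$ via sublevel sets of Lyapunov-type quantities, then Kotani/Ishii--Pastur to translate into empty ac spectrum) is the same as the paper's, but both key ingredients are sketched in ways that do not work. For the openness/$G_\delta$ part: the finite-scale averaged quantities $\frac1x \int_{\T^d} \log\|M_f(x,E,\omega)\|\,d\omega$ are indeed continuous in $f$, but by subadditivity their infimum over $x$ is $L_f(E)$, so $f \mapsto L_f(E)$ is only \emph{upper} semicontinuous: under a small perturbation the Lyapunov exponent can drop and the zero set $\{E : L_f(E)=0\}$ can grow. Hence ``uniform continuity of the transfer matrices in $f$ on compact energy windows'' gives semicontinuity in the wrong direction; it would make sets of the form $\{f : \mathrm{Leb}\{E \in [-R,R] : L_f(E)=0\} \ge \delta\}$ behave well, not the sublevel sets $\{f : M_R(f) < \delta\}$ that you need to be open. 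What is actually required is the upper semicontinuity of $f \mapsto M_R(f)$ itself, which is the genuinely hard Avila--Damanik lemma: it is proved not by finite-scale continuity but via Kotani's formula $L_f(E) = -\int_{\T^d} \Re m_{+,f,\omega}(E)\,d\omega$, convergence of the $m$-functions, harmonicity of $L$ in the upper half-plane together with global subharmonicity, and a harmonic-measure (Schwarz--Christoffel) argument. Moreover it must be proved for the $L^1$ topology on $L^\infty$-balls, not merely for uniform convergence, because of how the density step below is structured.

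For the density step your sketch (sparse barrier windows adapted to the continued fractions of $\alpha$, producing positive Lyapunov exponent on a full-measure set of energies) is not the trick of the discrete paper and is left entirely unproved; constructing by hand a continuous quasi-periodic sampling function with a.e.\ positive Lyapunov exponent is exactly what one does not know how to do here, and the theorem does not need it. The paper instead perturbs $f$ in sup norm by less than $\varepsilon$ to a possibly \emph{discontinuous} $\tilde f \in L^\infty(\T^d)$ that is constant along the directions transverse to the flow on the boxes of a fine partition and continuous, non-constant in the flow direction; then each orbit potential has the simple finite decomposition property, eventual periodicity is excluded by a limit-periodic argument (mollification of eventually periodic approximants plus the Avron--Simon fact that limit-periodic and quasi-periodic implies periodic), and the Lenz--Seifert--Stollmann theorem (finite local complexity and not eventually periodic implies no ac spectrum) kills the ac spectrum -- no Lyapunov positivity is ever proved. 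Since $\tilde f$ is discontinuous, a second approximation ``closes the jumps'': mollify $\tilde f$ to get $g \in C(\T^d)$ with $\|g-f\|_\infty < \varepsilon$ and $\|g - \tilde f\|_1$ small, and use the $L^1$-upper semicontinuity of $M_R$ (together with $M_R(\tilde f)=0$ and nonnegativity) to conclude $M_R(g) < \delta$; note that the dense sets consist of $f$ with $M_R(f)<\delta$, not of $f$ with empty ac spectrum, and only the countable intersection yields purely singular spectrum (with the $\omega$-independence supplied by Last--Simon). Without this two-step mechanism, or a genuine substitute for it, your proposal establishes neither the openness nor the density it relies on.
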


\begin{remarks}
(a) The minimality of the flow is a property of $\alpha \in \T^d$, and the result holds for any such fixed $\alpha$. The set $\mathcal{S}$ will then depend on the choice of $\alpha$.\\[1mm]
(b) There is no quantifier on $\omega \in \T^d$ in the statement of the result, even though the potential $V$ depends on it. This is due to the constancy of the absolutely continuous spectrum in $\omega$, which is a result of Last and Simon \cite[Theorem~1.5]{LS99}.\\[1mm]
(c) This result shows that there is a generic obstruction to an extension of the BDGL approach \cite{BDGL18+} or the EVY approach \cite{EVY18+} to the Deift conjecture \cite{D08, De17}.

\end{remarks}

One can also consider one-parameter families of potentials and operators by varying the coupling constant:

\begin{theorem}\label{t.main2}
Given $d \ge 2$ and a minimal translation flow on $\T^d$, $\R \ni x \mapsto \omega + x \alpha \in \T^d$, there is a dense $G_\delta$-set $\mathcal{S} \subseteq C(\T^d)$ such that for every $f \in \mathcal{S}$ and Lebesgue almost every $\lambda > 0$, the Schr\"odinger operator in $L^2(\R)$ with potential $V(x) = \lambda f(\omega + x \alpha)$ has purely singular spectrum.
\end{theorem}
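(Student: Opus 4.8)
The plan is to reduce Theorem~\ref{t.main2} to Theorem~\ref{t.main1} by a change of perspective on the sampling function. The key observation is that the potential $V(x) = \lambda f(\omega + x\alpha)$ can be rewritten as $V(x) = (\lambda f)(\omega + x\alpha)$, i.e., as the potential associated to the sampling function $\lambda f$ rather than $f$. Thus the two-parameter problem in $(f,\lambda)$ really only involves the single sampling function $g = \lambda f$, and the content of Theorem~\ref{t.main2} is that one can find a dense $G_\delta$-set $\mathcal{S}$ of sampling functions $f$ such that $\lambda f$ lies in the good set from Theorem~\ref{t.main1} for a.e.\ $\lambda > 0$.

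First I would invoke Theorem~\ref{t.main1} to obtain the dense $G_\delta$-set $\mathcal{G} \subseteq C(\T^d)$ of sampling functions $g$ for which the operator with potential $g(\omega + x\alpha)$ has purely singular spectrum. Write $\mathcal{G} = \bigcap_{n} \mathcal{G}_n$ with each $\mathcal{G}_n$ open and dense. The natural candidate for the new good set is
\[
\mathcal{S} = \set{ f \in C(\T^d) : \Leb\bigl( \set{\lambda > 0 : \lambda f \notin \mathcal{G}} \bigr) = 0 }.
\]
The verification then splits into two parts: (i) $\mathcal{S}$ has the desired consequence — this is immediate from the definition and Theorem~\ref{t.main1}; (ii) $\mathcal{S}$ is a dense $G_\delta$. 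For the $G_\delta$ property, I would use a Fubini/Baire-category argument: the map $(\lambda, f) \mapsto \lambda f$ is continuous from $(0,\infty) \times C(\T^d)$ to $C(\T^d)$, so the set $\CB = \set{(\lambda,f) : \lambda f \notin \mathcal{G}}$ is an $F_\sigma$ set in the product; by Fubini for the (completed) product of Lebesgue measure and the appropriate $\sigma$-algebra, the function $f \mapsto \Leb(\set{\lambda : \lambda f \notin \mathcal{G}})$ is measurable, but measurability alone does not give $G_\delta$. The cleaner route is to note that, for \emph{every} nonzero $f$, the set of \emph{bad} $\lambda$ is already small: if $f \neq 0$ is fixed, the map $\lambda \mapsto \lambda f$ is a homeomorphism of $(0,\infty)$ onto a ray in $C(\T^d)$, and one expects the bad set of $\lambda$ to be meager, but meagerness is not Lebesgue-null, so this must be supplemented.

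The honest approach, and the one I expect the paper to take, is to build $\mathcal{S}$ directly during the proof of Theorem~\ref{t.main1} rather than deriving Theorem~\ref{t.main2} as a formal corollary. The proof of Theorem~\ref{t.main1} presumably shows that for a dense $G_\delta$-set of $f$, a certain Lyapunov-exponent-type or Gordon-type criterion holds on a \emph{dense} set of energies $E$ (or on a full-measure set of $(E,\text{coupling})$ pairs), and that this suffices to rule out absolutely continuous spectrum. In Theorem~\ref{t.main2} one instead arranges the same criterion to hold, for a dense $G_\delta$-set of $f$, simultaneously for a.e.\ pair $(\lambda, E) \in (0,\infty) \times \R$; then for each such $f$, Fubini gives that for a.e.\ $\lambda$ the criterion holds for a.e.\ $E$, which suffices to conclude $\sigma_{\ac} = \emptyset$ for the operator with potential $\lambda f(\omega + x\alpha)$. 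Concretely, each approximating open dense set $\mathcal{S}_n$ is defined by a condition of the form ``there exist arbitrarily good quasi-periodic return patterns witnessed at energies filling up a set of $(\lambda,E)$-measure at least $n^{-1}$-dense,'' engineered so that the intersection over $n$ forces full measure.

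**The main obstacle** will be the same one as in Theorem~\ref{t.main1}: producing, by a perturbation of $f$ that is small in $C(\T^d)$, the delicate arithmetic/dynamical resonances (Gordon-type almost-repetitions of the potential, or collapse of the Lyapunov exponent / transfer matrix growth) that destroy absolutely continuous spectrum, while keeping track of the energy parameter. The extra coupling-constant parameter $\lambda$ is comparatively harmless — it enters the transfer matrix cocycle smoothly, and a resonance established at $(\lambda_0, E_0)$ persists on a neighborhood, or can be spread over a positive-measure set of $\lambda$ by a straightforward rescaling of energy $E \mapsto \lambda E$ combined with Theorem~\ref{t.main1}'s density statement. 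So the real work is entirely in the machinery already deployed for Theorem~\ref{t.main1}; Theorem~\ref{t.main2} is obtained by carrying the parameter $\lambda$ along and applying Fubini at the end, and I would present it in a short paragraph once the proof of Theorem~\ref{t.main1} is in place.
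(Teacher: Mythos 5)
Your overall plan---abandon the naive reduction to Theorem~\ref{t.main1} and instead carry the coupling constant through the Baire-category machinery, applying Fubini at the end---is indeed the shape of the paper's argument: the final proof is exactly the short paragraph you predict, with open sets $M_{R,\delta}(\Lambda) = \{ f \in C(\T^d) : \int_0^\Lambda M_R(\lambda f)\, d\lambda < \delta \}$, where $M_R(g)$ is the Lebesgue measure of the energies in $[-R,R]$ with vanishing Lyapunov exponent, and $\mathcal{S} = \bigcap_n M_{n,\frac1n}(n)$. Openness comes from upper semi-continuity of $f \mapsto \int_0^\Lambda M_R(\lambda f)\, d\lambda$ on $L^\infty$-balls in the $L^1$ topology (Lemma~\ref{l.main1}, deduced from the unintegrated statement via Fatou), and for $f$ in the intersection one gets $\int_0^\Lambda M_R(\lambda f)\,d\lambda = 0$ for all $R,\Lambda$, whence by Fubini and Ishii--Kotani--Pastur plus Last--Simon the spectrum is purely singular for a.e.\ $\lambda>0$.

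The genuine gap is at the step you leave vague: the density of these open sets, and neither mechanism you sketch for handling $\lambda$ would close it. Persistence of a resonance near a single $(\lambda_0,E_0)$ controls at best an open set of couplings, whereas density of $M_{R,\delta}(\Lambda)$ for every $\delta>0$ requires making the set of pairs $(\lambda,E) \in (0,\Lambda)\times[-R,R]$ with zero Lyapunov exponent have arbitrarily small two-dimensional measure, i.e., destroying the absolutely continuous spectrum for essentially \emph{all} couplings simultaneously by one small perturbation of $f$. The rescaling $E \mapsto \lambda E$ is also unavailable: $-u'' + \lambda V u = Eu$ cannot be converted into $-u'' + Vu = \lambda^{-1}E u$ without rescaling $x$, which changes the frequency and hence the flow. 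The paper's mechanism is the one piece of extra input beyond the Theorem~\ref{t.main1} machinery: the discontinuous perturbation $\tilde f$ of Proposition~\ref{p.destroyac} yields potentials with the simple finite decomposition property that are not eventually periodic, and both properties are invariant under multiplication by any $\lambda \neq 0$ (Remark~\ref{r.addinglambda}); hence, by the Lenz--Seifert--Stollmann criterion, $M_R(\lambda \tilde f) = 0$ for every $\lambda > 0$, so $\int_0^\Lambda M_R(\lambda \tilde f)\, d\lambda = 0$, and mollifying $\tilde f$ and invoking the integrated semi-continuity (Lemma~\ref{l.main2}) produces a continuous $g$ with $\|f-g\|_\infty < \varepsilon$ and $\int_0^\Lambda M_R(\lambda g)\, d\lambda < \delta$. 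Without an argument of this kind establishing uniformity in $\lambda$ of the a.c.-destroying perturbation, your proposal does not deliver the density needed for the $G_\delta$ construction.
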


\section{Preliminaries}

\subsection{Discontinuous Periodic Functions Having Limit-Periodic Limits}

Recall that a bounded uniformly continuous function on $\R$ is
called \emph{almost periodic} if for any $\varepsilon >0$ the set of
$t\in\R$ with $\|f - f(\cdot -t)\|_\infty <\varepsilon$ is
relatively dense. A bounded uniformly continuous function on $\R$ is
called \emph{limit-periodic} if it is a uniform limit of continuous
periodic functions. But what if we have a uniformly convergent
sequence of discontinuous periodic functions? Can the limit be
limit-periodic? Clearly, we need to assume at least the continuity
of the limit, but what else is needed?

The following statement is likely well known, but since it will play
a role in the proof of our main result, we include its short proof
for the convenience of the reader.

\begin{prop}\label{p.miollification}
Suppose $f \in C(\R)$ is uniformly continuous and, for $n \ge 1$,
$f_n \in L^\infty(\R)$ is periodic. If $\|f_n - f\|_\infty \to 0$ as
$n \to \infty$, then $f$ is limit-periodic.
\end{prop}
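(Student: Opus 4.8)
The plan is to exploit the mollification idea hinted at by the proposition's label: approximate the (possibly discontinuous) periodic functions $f_n$ by \emph{continuous} periodic functions of the same period without sacrificing uniform closeness to $f$. Fix $\varepsilon > 0$. By hypothesis there is an $n$ with $\|f_n - f\|_\infty < \varepsilon/2$. Since $f_n$ is $p_n$-periodic and bounded, I would choose a smooth, nonnegative, compactly supported mollifier $\varphi_\delta$ on $\R$ with $\int \varphi_\delta = 1$ and set $g_\delta \defeq f_n * \varphi_\delta$. Then $g_\delta$ is continuous, and it is still $p_n$-periodic because convolution commutes with the translation $x \mapsto x + p_n$ under which $f_n$ is invariant. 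So $g_\delta$ is a continuous periodic function, hence in particular limit-periodic (trivially, being periodic).

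The key step is to show that for $\delta$ small enough, $\|g_\delta - f\|_\infty < \varepsilon$. Here is where the uniform continuity of $f$ enters. Write
\[
g_\delta(x) - f(x) = \int \varphi_\delta(y)\,\bigl(f_n(x-y) - f(x)\bigr)\,dy,
\]
and split $f_n(x-y) - f(x) = \bigl(f_n(x-y) - f(x-y)\bigr) + \bigl(f(x-y) - f(x)\bigr)$. The first term is bounded by $\|f_n - f\|_\infty < \varepsilon/2$ uniformly in $x$ and $y$. For the second term, uniform continuity of $f$ gives a $\delta_0 > 0$ such that $|f(x-y) - f(x)| < \varepsilon/2$ whenever $|y| < \delta_0$; choosing $\delta < \delta_0$ so that $\supp \varphi_\delta \subseteq (-\delta_0, \delta_0)$ makes this contribution $< \varepsilon/2$ as well, again uniformly in $x$. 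Integrating against $\varphi_\delta$ (a probability density) preserves these bounds, so $\|g_\delta - f\|_\infty < \varepsilon$. Since $\varepsilon > 0$ was arbitrary and each $g_\delta$ is continuous periodic, $f$ is a uniform limit of continuous periodic functions, i.e.\ limit-periodic.

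I do not anticipate a serious obstacle; the only mild subtlety is making sure that the mollified function really is periodic with an honest period (not merely ``approximately periodic''), which is immediate once one observes $f_n(\cdot + p_n) = f_n$ almost everywhere and convolution with an $L^1$ kernel respects this. One should also note that $f_n \in L^\infty$ (not assumed continuous) is exactly why the mollification is needed in the first place; if the $f_n$ were already continuous the statement would be trivial. A final remark: the proposition is stated for functions on $\R$, and the definition of limit-periodic presupposes bounded uniform continuity of $f$, so there is nothing further to check on the side of $f$ itself—its uniform continuity is given, and its boundedness follows from $\|f - f_n\|_\infty \to 0$ together with $f_n \in L^\infty$.
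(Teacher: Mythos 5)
Your proposal is correct and follows essentially the same route as the paper: mollify the possibly discontinuous periodic $f_n$ to obtain continuous periodic approximants and use the uniform continuity of $f$ (together with $\|f_n - f\|_\infty \to 0$) to keep the mollified functions uniformly close to $f$. If anything, your splitting $f_n(x-y)-f(x) = (f_n(x-y)-f(x-y)) + (f(x-y)-f(x))$ spells out the estimate more explicitly than the paper's appeal to the mollification theorem in Evans plus diagonalization.
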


\begin{proof}
The issue is that the $f_n$ may be discontinuous and hence the
remedy will be to make them continuous via mollification and then to
observe that the continuous mollified functions still converge
uniformly to $f$. Compare \cite[Section~C.4]{Evans} for the
definitions and general results below.

Explicitly, define $\eta \in C^\infty(\R)$ by
$$
\eta(x) = \begin{cases} C \exp \left( \frac{1}{|x|^2 - 1}\right) & \text{if } |x| < 1, \\ 0 & \text{if } |x| \ge 1, \end{cases}
$$
where $C > 0$ is chosen so that $\int_\R \eta(x) \, dx = 1$. Then, for $\varepsilon > 0$, set
$$
\eta_\varepsilon (x) = \frac{1}{\varepsilon} \eta \left( \frac{x}{\varepsilon} \right)
$$
and, for $n \ge 1$, $f_n^\varepsilon = \eta_\varepsilon \ast f$, that is,
$$
f_n^\varepsilon(x) = \int_\R \eta_\varepsilon(x - y) f_n(y) \, dy.
$$

By the uniform continuity of $f$, Theorem~6 in \cite[Section~C.4]{Evans} and its proof (especially the proof of part (iii)), imply that for each $n \ge 1$ and $\varepsilon > 0$, $f_n^\varepsilon$ is smooth (and in particular continuous) and $\|f_n^\varepsilon - f_n\|_\infty \to 0$ as $\varepsilon \to 0$. Thus, the statement follows by diagonalization, that is, for a suitable sequence $\varepsilon_n \to 0$, the functions $f_n^{\varepsilon_n}$ are continuous, periodic (by construction) and converge uniformly to $f$, showing that $f$ is indeed limit-periodic.
\end{proof}

A function $q$ on $\R$ is called \emph{eventually periodic} if there exists a periodic function $p$ with $p(x) = q(x)$ for all
sufficiently large $x \in \R$.

\begin{coro}\label{c.miollification}
Suppose $f \in C(\R)$ is almost periodic  and, for $n \ge 1$, $f_n \in L^\infty(\R)$ is eventually periodic. If $\|f_n - f\|_\infty \to 0$ as $n \to \infty$, then $f$ is limit-periodic.
\end{coro}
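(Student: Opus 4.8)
The plan is to reduce Corollary~\ref{c.miollification} to Proposition~\ref{p.miollification} by showing that an almost periodic function that is a uniform limit of eventually periodic functions is in fact a uniform limit of honestly periodic functions. The key observation is that if $p$ is periodic and $q$ is eventually periodic with $q = p$ for all sufficiently large $x$, then $q$ is \emph{bounded}, and $p$ is uniquely determined by $q$ (since a periodic function agreeing with $q$ on a half-line agrees with $q$ on all of $\R$... no — rather, $p$ is the unique periodic extension of the eventual behaviour of $q$). So to each $f_n$ we may associate a periodic function $p_n \in L^\infty(\R)$, namely the periodic function matching $f_n$ near $+\infty$.

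The main step is then to prove $\|p_n - f\|_\infty \to 0$. Fix $n$ and let $T$ be a period of $p_n$, and suppose $f_n = p_n$ on $[R,\infty)$. Given $\varepsilon > 0$, by almost periodicity of $f$ the set of translation $\varepsilon$-almost-periods of $f$ is relatively dense, so there is a constant $L$ such that every interval of length $L$ contains some $t$ with $\|f - f(\cdot - t)\|_\infty < \varepsilon$; choosing $t$ large (at least $R$, which we can do by relative density) we get, for any $x \in \R$,
\[
|p_n(x) - f(x)| \le |p_n(x) - p_n(x+ kT)| + |f_n(x+kT) - f(x+kT)| + |f(x+kT) - f(x)|
\]
where $k$ is chosen so that $x + kT > R$ (using periodicity of $p_n$, $p_n(x) = p_n(x+kT) = f_n(x+kT)$), and then comparing $f(x+kT)$ back to $f(x)$ via an almost-period $t \approx kT$ of $f$. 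Quantitatively: $|p_n(x) - f(x)| \le \|f_n - f\|_\infty + |f(x + kT) - f(x)|$, and we may approximate $kT$ by a genuine $\varepsilon$-almost-period to bound the last term by $\varepsilon + (\text{oscillation controlled by } \|f_n-f\|_\infty)$. Letting $n \to \infty$ forces $\|f_n - f\|_\infty \to 0$, hence $\|p_n - f\|_\infty \to 0$ as well, and then Proposition~\ref{p.miollification} applies to the periodic functions $p_n$ to conclude $f$ is limit-periodic.

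A cleaner route, which I would actually take, avoids the epsilon-juggling: since $f_n \to f$ uniformly and $f_n = p_n$ on a half-line $[R_n, \infty)$, the function $f$ restricted to $[R_n,\infty)$ is within $\|f_n - f\|_\infty$ of the periodic function $p_n$; but $f$ is almost periodic, so its behaviour on any half-line determines it up to uniform approximation on all of $\R$ — more precisely, $\|f\|_\infty = \sup_{x \ge R_n}|f(x)|$ and, for any $g \in C(\R)$ almost periodic, $\sup_{x\in\R}|f(x) - p_n(x)| $ can be estimated by $\sup_{x \ge R_n}|f(x) - p_n(x)|$ together with the fact that translating $f$ by large almost-periods barely changes it. Concretely, for fixed $x$ pick an $\varepsilon$-almost-period $t \ge R_n - x$ of $f$ and an integer multiple $mT_n$ of the period of $p_n$ that is itself an $\varepsilon$-almost-period of $p_n$ close to $t$ (both exist by relative density of almost-periods and a pigeonhole/Bohr compactness argument, or one simply uses that the set of common $\varepsilon$-almost-periods of the almost periodic functions $f$ and $p_n$ is relatively dense); then $|f(x) - p_n(x)| \le |f(x) - f(x+t)| + |f(x+t) - p_n(x+t)| + |p_n(x+t) - p_n(x)| \le \varepsilon + \|f_n - f\|_\infty + \varepsilon$. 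Taking the supremum over $x$ gives $\|f - p_n\|_\infty \le \|f_n - f\|_\infty + 2\varepsilon$ for every $\varepsilon$, hence $\|f - p_n\|_\infty \le \|f_n - f\|_\infty \to 0$.

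\textbf{Main obstacle.} The only subtlety is the step asserting that a multiple of the period $T_n$ of $p_n$ can be chosen to simultaneously be (close to) an $\varepsilon$-almost-period of $f$ — equivalently, that $f$ and the periodic function $p_n$ have a relatively dense set of common $\varepsilon$-almost-periods. This follows from the general fact that a finite collection of almost periodic functions has a relatively dense set of common $\varepsilon$-almost-periods (Bohr's theorem), applied to $f$ and $p_n$; one must note $p_n$ is itself almost periodic only if it is continuous, which it need not be. To sidestep this I would instead argue directly: since $p_n$ is $T_n$-periodic, \emph{every} multiple of $T_n$ is an exact period, so it suffices to find a multiple $mT_n$ that is an $\varepsilon$-almost-period of $f$ and satisfies $mT_n \ge R_n - x$; existence of such $m$ follows because the $\varepsilon$-almost-periods of $f$ are relatively dense with some gap bound $L$, and among $\{T_n, 2T_n, \dots\}$ with $kT_n$ eventually exceeding any bound, the interval $[kT_n, kT_n + L]$ must contain an $\varepsilon$-almost-period $s$ of $f$; but we need the almost-period itself to be a multiple of $T_n$. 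The robust fix: apply Bohr to the \emph{single} almost periodic function $f$ to get its $\varepsilon$-almost-periods relatively dense with gap $L$, pick such $t$ with $t \ge R_n - x$, then use $|p_n(x) - p_n(x+t)| \le $ (modulus of continuity of $p_n$ at displacement $\mathrm{dist}(t, T_n\Z)$) — but $p_n$ is discontinuous. Hence the genuinely clean path is the diagonalization one: it is enough to produce \emph{some} sequence of continuous periodic $g_n \to f$, so mollify each $p_n$ first (as in Proposition~\ref{p.miollification}), giving continuous periodic $\tilde p_n$ with $\|\tilde p_n - f_n\|_\infty$ small, then the common-almost-periods argument applies legitimately to the continuous functions $f$ and $\tilde p_n$, and one concludes exactly as above that $\|\tilde p_n - f\|_\infty \to 0$, so $f$ is limit-periodic by definition. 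This is the argument I would write up.
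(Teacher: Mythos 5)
Your final plan --- replace each $f_n$ by the periodic extension $p_n$ of its tail, mollify $p_n$ into a continuous periodic $\tilde p_n$, and then use common $\varepsilon$-almost-periods of the two Bohr almost periodic functions $f$ and $\tilde p_n$ to transport the closeness of $\tilde p_n$ to $f$ from a right half-line to all of $\R$ --- is sound, but it is a genuinely different execution from the paper's. The paper never invokes the common-almost-period theorem and never makes the periodic approximant continuous before the last step: for fixed $\varepsilon$ it takes $q = f_m$ with $\|f - q\|_\infty < \varepsilon$, the periodic $p$ (period $P$) agreeing with $q$ near $+\infty$, and translations $t_n$ with $\|f(\cdot - t_n) - f\|_\infty \to 0$; writing $t_n = k_n P + s_n$, passing to a subsequence with $s_n \to s$ and using uniform continuity of $f$ to take $s_n \equiv s$, the translated $q$'s converge pointwise to (a shift of) $p$, and translation invariance of the sup-norm gives $\|f - p\|_\infty \le \|f - q\|_\infty < \varepsilon$; only then is Proposition~\ref{p.miollification} applied to mollify the discontinuous periodic approximants. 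So the paper's route needs nothing beyond uniform continuity plus a compactness/subsequence argument, whereas yours trades that for the classical fact (which you should cite) that finitely many Bohr almost periodic functions have relatively dense common $\varepsilon$-almost-periods, and in return produces continuous periodic approximants directly, dispensing with Proposition~\ref{p.miollification} at the end.

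One claim in your write-up does need repair: ``$\|\tilde p_n - f_n\|_\infty$ small'' is not true in general, because $f_n$ and its periodic tail-extension $p_n$ agree only on $[R_n, \infty)$ and may differ badly to the left (indeed, if that global bound held, then $\|\tilde p_n - f\|_\infty \to 0$ would follow at once and the whole almost-period argument would be superfluous). What is true, and all your argument actually uses (at the point $x + t$ deep in the half-line), is that $\tilde p_n$ is uniformly close to $f$ on $[R_n + \delta_n, \infty)$: there $p_n = f_n$ is uniformly close to the uniformly continuous $f$, so the mollification estimate from Proposition~\ref{p.miollification} applies verbatim. (Alternatively, $\|\tilde p_n - p_n\|_\infty$ is globally small because, by periodicity, the oscillation of $p_n$ over any interval of length $\delta$ equals its oscillation over a translate of that interval contained in $[R_n, \infty)$, hence is at most $2\|f_n - f\|_\infty$ plus the modulus of continuity of $f$ at scale $\delta$.) With that correction, your argument goes through.
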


\begin{proof} Let $\varepsilon >0$ be arbitrary. By the preceding proposition it suffices to find a periodic $p \in L^\infty (\R)$ with $\|f - p\|< \varepsilon$.

By assumption there exists an eventually periodic $q \in L^\infty (\R)$ (viz $q = f_m$ for sufficiently large $m$) with
$$
\|f - q\| < \varepsilon.
$$
As $q$ is eventually periodic, there exists a periodic $p \in L^\infty (\R)$ with $p(x) = q (x)$ for all sufficiently large $x$. Let $P>0$ with $p (x) = p (x + P)$ for all $x \in \R$.

As $f$ is almost periodic, there exists a sequence $(t_n)$ in $\R$ with $\|f_{t_n} - f\|_\infty \to 0$ as $n \to \infty$. Here, we set $g_t := g(\cdot - t)$. There exist then unique $k_n\in \N$ and $0 \leq s_n < P$ with $t_n = k_n P + s_n$. Restricting attention to a subsequence if necessary, we can then assume  without loss of generality that $s_n \to s$. As $f$ is uniformly continuous, we can even assume without loss of generality $s_n = s$ for all $n$. To simplify notation we will assume $s =0$.

Hence, $ f - p$ is the pointwise limit of $f - q_{t_n}$ for $n\to \infty$. This gives
\begin{align*}
\|f - p\|_\infty & \leq \limsup_n \|f - q_{t_n}\|_\infty \\
& \leq \limsup_n (\|f - f_{t_n}\|_\infty + \|f_{t_n} - q_{t_n}\|_\infty) \\
& = (\lim_n \|f- f_{t_n}\|_\infty) + \limsup_n \| f_{t_n} - q_{t_n}\|_\infty \\
& = \|f - q\|_\infty\\
& < \varepsilon.
\end{align*}
Here, we used the invariance of $\|\cdot\|_\infty$ under translation in the penultimate step.
\end{proof}

\subsection{Transfer Matrices, Lyapunov Exponents, and Weyl-Titch\-marsh Functions}

This subsection recalls important and well-known concepts, mainly to fix notation.

Fixing $d \ge 2$ and a minimal translation flow on $\T^d$, $\R \ni x \mapsto \omega + x \alpha \in \T^d$, as well as a real-valued sampling function $f \in C(\T^d)$, the \emph{transfer matrices} are defined via
\begin{align*}
\frac{d}{dx} M_f(x,E,\omega) & = A_f(E, \omega + x \alpha) M_f(x,E,\omega) \\
M_f(0,E,\omega) & = I
\end{align*}
for $x \in \R$, $E \in \C$, $\omega \in \T^d$, where
$$
A_f(E, \omega) = \begin{pmatrix} 0 & 1 \\ f(\omega) - E & 0 \end{pmatrix}.
$$
These transfer matrices are defined in such a way that $u$ solves the differential equation
\begin{equation}\label{e.ode}
- u''(x) + f(\omega + x \alpha) u(x) = E u(x)
\end{equation}
if and only if it solves
$$
\begin{pmatrix} u(x) \\ u'(x) \end{pmatrix} = M_f(x,E,\omega) \begin{pmatrix} u(0) \\ u'(0) \end{pmatrix}.
$$

By the subadditive ergodic theorem, there is a number $L(E) \ge 0$, called the \emph{Lyapunov exponent}, so that
$$
L_f(E) = \lim_{|x| \to \infty} \frac{1}{|x|} \log \| M_f(x,E,\omega) \|
$$
for almost every $\omega \in \T^d$.

The map $E \mapsto L_f(E)$ is real-symmetric and subharmonic. Moreover, we have (see \cite[Lemma~3.2 and (49)--(50)]{K97})
\begin{equation}\label{e.lewtmf}
L_f(E) = - \int_{\T^d} \Re m_{+,f,\omega}(E) \, d\omega
\end{equation}
for $E \in \C_+$, the upper half-plane, where $m_{+,f,\omega}$ is the \emph{Weyl-Titchmarsh} $m$-\emph{function} on the right half-line associated with the potential $V(x) = f(\omega + x \alpha)$, defined by
$$
m_{+,f,\omega}(E) = \frac{u_{+,f,\omega}'(0)}{u_{+,f,\omega}(0)},
$$
where $u_{+,f,\omega}$ is a solution of \eqref{e.ode} that is square-integrable at $+\infty$.\footnote{To see that such a solution exists, observe that $E \not\in \sigma(H)$ by self-adjointness, and hence $\tilde u_{+,f,\omega} := (H - E) \chi_{(-1,0)} \in L^2(\R)$. But by definition $\tilde u_{+,f,\omega}$ solves \eqref{e.ode} on $(0,\infty)$. Thus, keeping it unchanged on the right half-line and extending it to a solution on $\R$ by solving \eqref{e.ode}, we obtain $u_{+,f,\omega}$.}

\section{A Semi-Continuity Result}

In this section we discuss the continuum analog of the Avila-Damanik semi-continuity result \cite[Lemma~1]{AD05}. The general structure of the proof will be the same, and hence we will focus mostly on the aspects that are different between the discrete case and the continuum case.

Set
$$
M_R(f) = \mathrm{Leb} \left( \{ E \in \R \cap [-R,R] : L_f(E) = 0 \} \right).
$$

\begin{remark}\label{r.ipksl}
By the Ishii-Kotani-Pastur Theorem \cite[Theorem~4.7]{K97} and the Last-Simon Theorem \cite[Theorem~1.5]{LS99}, we have that $M_R(f) = 0$ if and only if the Schr\"odinger operator in $L^2(\R)$ with potential $V(x) = f(\omega + x \alpha)$ has purely singular spectrum in the energy interval $[-R,R]$ for every $\omega \in \T^d$.
\end{remark}

Here is the continuum analog of \cite[Lemma~1]{AD05}:

\begin{lemma}\label{l.main1}
For all choices of $r, R, \Lambda > 0$, the maps
\begin{equation}\label{e.map1}
(B_r(L^\infty(\T^d)) , \| \cdot \|_1) \to [0,\infty), \quad f \mapsto M_R(f)
\end{equation}
and
\begin{equation}\label{e.map2}
(B_r(L^\infty(\T^d)) , \| \cdot \|_1) \to [0,\infty), \quad f \mapsto \int_0^\Lambda M_R(\lambda f) \, d\lambda
\end{equation}
are upper semi-continuous. Here, $B_r$ denotes the closed ball with radius $r$ in the essential-supremum norm.
\end{lemma}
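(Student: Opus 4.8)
The plan is to adapt the argument of \cite[Lemma~1]{AD05} to the continuum setting, exploiting the integral formula \eqref{e.lewtmf} for the Lyapunov exponent in terms of the averaged Weyl--Titchmarsh function. The key point is that $L_f$ is the negative real part of an average of Herglotz functions, hence subharmonic and real-analytic (or at least very well-behaved) in $E$ on $\C_+$, and that this structure is stable under $\|\cdot\|_1$-small perturbations of $f$. Fix $r, R, \Lambda > 0$ and a function $f \in B_r(L^\infty(\T^d))$, together with a sequence $f_k \to f$ in $\|\cdot\|_1$ with all $f_k \in B_r$. The goal for the first map is to show $\limsup_k M_R(f_k) \le M_R(f)$. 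First I would observe that $m_{+,f,\omega}(E)$ depends continuously on $f$ in an appropriate sense: for $E$ in a compact subset of $\C_+$, an $L^1$-small change in the potential produces a small change in $m_{+}$, by standard ODE-perturbation estimates for the Riccati equation $m' = f(\omega + x\alpha) - E - m^2$ on the right half-line (the imaginary part of $E$ gives the needed contraction/stability). Integrating over $\omega$ and using \eqref{e.lewtmf}, one gets that $L_{f_k}(E) \to L_f(E)$ uniformly on compact subsets of $\C_+$; then, since these are nonnegative subharmonic functions, a Fatou/Vitali argument transfers the convergence to the boundary $\R$ in a form suitable for controlling level sets.

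The heart of the matter is the standard measure-theoretic lemma used in \cite{AD05}: if $g_k, g \ge 0$ are subharmonic on $\C_+$, uniformly bounded on compacts, with $g_k \to g$ uniformly on compact subsets of $\C_+$, then
\[
\limsup_{k\to\infty} \Leb\big( \{ E \in [-R,R] : g_k(E) = 0 \} \big) \le \Leb\big( \{ E \in [-R,R] : g(E) = 0 \} \big),
\]
where the boundary values $g_k(E), g(E)$ are taken in the nontangential sense (which exist a.e.\ and equal the subharmonic function's boundary trace). The mechanism is that $\{g = 0\}$ on $\R$ is detected by the harmonic measure / Poisson-type representation: if $g(E_0) = 0$ at a boundary point of positive density of the zero set, then $g$ is very small on a nearby Stolz region, and this smallness passes to $g_k$ for large $k$, but conversely a boundary zero of $g_k$ forces $g_k$ small on a Stolz region, hence $g$ small there; packaging this via the inequality $\int (\text{Poisson kernel}) \log(1/g_k) \lesssim -L_{f_k}$ and lower-semicontinuity of the entropy-type integral yields the $\limsup$ bound. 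I would follow the \cite{AD05} proof closely here, noting that the only continuum-specific input needed is the formula \eqref{e.lewtmf} and the stability of $m_{+,f,\omega}$, both already in hand.

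For the second map, the plan is to deduce upper semi-continuity of $f \mapsto \int_0^\Lambda M_R(\lambda f)\, d\lambda$ from that of each $f \mapsto M_R(\lambda f)$ (which follows from the first part, since $f_k \to f$ in $\|\cdot\|_1$ implies $\lambda f_k \to \lambda f$ in $\|\cdot\|_1$, uniformly for $\lambda \in [0,\Lambda]$, and $\lambda f_k \in B_{\Lambda r}$) combined with the uniform bound $M_R(\lambda f) \le 2R$ and the reverse Fatou lemma: $\limsup_k \int_0^\Lambda M_R(\lambda f_k)\, d\lambda \le \int_0^\Lambda \limsup_k M_R(\lambda f_k)\, d\lambda \le \int_0^\Lambda M_R(\lambda f)\, d\lambda$.

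The main obstacle I anticipate is making the convergence $L_{f_k} \to L_f$ on $\C_+$ genuinely uniform on compacts from only $L^1$-convergence of the sampling functions: the Riccati solution $m_{+,f,\omega}(E)$ is built from the $L^2$-solution at $+\infty$, and controlling it requires care near $\omega$ for which the half-line problem is "bad," plus an argument that the $\omega$-average smooths this out. One clean route is to bypass pointwise control of $m_+$ and instead work directly with $\log\|M_f(x,E,\omega)\|$: an $L^1$-perturbation of the potential changes the transfer matrix cocycle in a controlled way (Gronwall on finite windows), and combined with upper-semicontinuity of the Lyapunov exponent as an infimum of the finite-scale averages $\frac{1}{x}\int \log\|M_f(x,E,\omega)\|\,d\omega$, this should give what is needed; this is, I expect, exactly the place where the proof must diverge in detail from the discrete argument, where the analogous step is essentially trivial.
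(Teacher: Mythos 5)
Your main-line plan (convergence of the $m$-functions, the formula \eqref{e.lewtmf} to pass to $L_{f_k}\to L_f$ on $\C_+$, an AD05-style harmonic-measure argument on $[-R,R]$, and Fatou with the trivial bound $M_R\le 2R$ for the $\lambda$-integrated map) is the same route as the paper's. But your fallback in the last paragraph — the place you yourself identify as where the proof must diverge from the discrete case — is wrong-directional. Writing $L_f(E)$ as the infimum over $x$ of the finite-scale averages $\frac1x\int_{\T^d}\log\|M_f(x,E,\omega)\|\,d\omega$, each continuous in $f\in(B_r,\|\cdot\|_1)$ by Gronwall, only gives \emph{upper} semicontinuity of $f\mapsto L_f(z)$, i.e.\ $\limsup_k L_{f_k}(z)\le L_f(z)$. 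What the contradiction argument needs is the opposite inequality: from $L_{f_k}=0$ on large subsets of $[-R,R]$ one must force $L_f$ to vanish on a large set, and for that one needs $L_f(z)\le\liminf_k L_{f_k}(z)$ on $\C_+$ (in fact genuine convergence). That lower-semicontinuity-type input is exactly what cannot be obtained from subadditivity/Gronwall and is the whole reason for invoking Kotani's formula \eqref{e.lewtmf}: after passing to a subsequence with $f_k\to f$ pointwise a.e.\ on $\T^d$ (note that $\|f_k-f\|_{L^1(\T^d)}\to 0$ alone does \emph{not} make the fixed-$\omega$ potentials close, which is the gap your Riccati-perturbation sentence glosses over), one gets $m_{+,f_k,\omega}\to m_{+,f,\omega}$ on $\C_+$ for a.e.\ $\omega$ \`a la \cite{JM83}, and then dominated convergence gives pointwise convergence of $L_{f_k}$ to $L_f$ on $\C_+$ — which suffices; your worry about upgrading to uniform convergence on compacts is a non-issue (and follows anyway from local uniform boundedness and harmonicity).

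A second problem is that the ``standard measure-theoretic lemma'' you isolate is false as stated for abstract nonnegative (sub)harmonic functions on $\C_+$ with nontangential boundary traces: take $g_k$ to be the Poisson extensions of $\{0,1\}$-valued boundary data oscillating with mean $\tfrac12$ on $[-R,R]$ and equal to $1$ outside; then $g_k\to g$ locally uniformly in $\C_+$, each $g_k$ has boundary zero set of measure $R$, but the limit's boundary trace is $\tfrac12$ on $[-R,R]$, so its zero set is null. What rescues the argument in \cite{AD05} (and in the paper) is the additional structure of the Lyapunov exponent: it is subharmonic on all of $\C$ with Riesz measure supported on $\R$ and harmonic on $\C_\pm$, and its value at a real energy is the actual Lyapunov exponent there; this yields $L_f(E)\le\liminf_{\epsilon\downarrow 0}L_f(E+i\epsilon)$, which, combined with the sub-mean value inequality over the triangle and the Schwarz--Christoffel control of harmonic measure on its base, is what transfers the zero-set mass to the limit. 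Since you say you would follow \cite{AD05} closely, this part is repairable, but as written neither the abstract lemma nor the ``entropy-type'' mechanism you sketch is the correct statement, and the proposed bypass of the $m$-function step does not work.
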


\begin{proof}
It is enough to show that \eqref{e.map1} is upper semi-continuous, the upper semi-continuity of \eqref{e.map2} then follows from that via Fatou's lemma.

The proof of the upper semi-continuity of \eqref{e.map1} proceeds in the same way as in \cite{AD05}. Assuming that the upper semi-continuity of \eqref{e.map1} fails for some choice of $r, R, \Lambda > 0$, there must be $f_n, f \in L^\infty(\T^d)$ such that
\begin{itemize}

\item[(i)] $f_n \to f$ in $L^1$ and pointwise as $n \to \infty$,

\item[(ii)] $\|f_n\|_\infty \le r$ for every $n \ge 1$ and $\|f\|_\infty \le r$,

\item[(iii)] $\liminf M_R(f_n) \ge M_R(f) + \varepsilon$ for some $\varepsilon > 0$.

\end{itemize}

By (i) and (ii), we have pointwise convergence of the $m$-functions $m_{+,f,\omega}$ in $\C_+$ for almost every $\omega \in \T^d$ (this follows from a modification of the argument given in \cite{JM83}). Thus, by \eqref{e.lewtmf}, (ii), and dominated convergence, the associated Lyapunov exponents $L_{f_n}$ converge pointwise in $\C_+$ to $L_f$.

Next, consider the region $U$ in $\C_+$ bounded by the equilateral triangle $T$ with sides $I, J, K$, where $I = [-R,R] \subset \R$. From here the proof proceeds verbatim as in \cite{AD05}, using the Schwarz-Christoffel formula, as well as the fact that the Lyapunov exponent is harmonic in $\C_+$ and subharmonic (and in particular upper semi-continuous) globally, to derive a contradiction to (iii).
\end{proof}

\section{Small Perturbations That Destroy the Absolutely Continuous Spectrum}

In this section we discuss how arbitrarily small perturbations can destroy the absolutely continuous spectrum. Here, we use results of \cite{KLS11}.

We first recall some basic concepts from \cite{KLS11}. A
\emph{piece} is a pair $(W,I)$ consisting of an interval $I
\subseteq \R$ with length $|I| > 0$ (with $|I| = \infty$ allowed)
and a locally bounded function $W$ on $\R$ supported on $I$. We
abbreviate pieces by $W^I$. Without restriction, we may assume that
$\min I = 0$. A \emph{finite piece} is a piece of finite length. The
\emph{concatenation} $W^I = W_1^{I_1} \mid W_2^{I_2} \mid \ldots$ of
a finite or countable family $(W_j^{I_j})_{j\in N}$, with $N =
\set{1,2,\ldots,N}$ (for $N$ finite) or $N = \N$ (for $N$ infinite),
of finite pieces is defined by
\begin{align*}
I & = \left[0, \sum_{j\in N} |I_j|\right],\\
W & = W_1+\sum_{j\in N,\,j\geq 2}
W_j\Big(\cdot-\Big(\sum_{k=1}^{j-1} |I_k| \Big)\Big).
\end{align*}
In this case we say  that $W^I$ is \emph{decomposed} by $(W_j^{I_j})_{j\in N}$.

Let now  $V$ be a locally bounded function on $\R$. We say that $V$ has the \emph{finite decomposition property} if there exist a finite set $\mathcal{P}$ of finite pieces and $x_0 \in \R$ such that $(1_{[x_0,\infty)} V)$ is a translate of a concatenation $W_1^{I_1} \mid W_2^{I_2} \mid \ldots$ with $W_j^{I_j} \in \mathcal{P}$ for all $j \in \N$. We say that $V$ has the \emph{simple finite decomposition property} if it has
the f.d.p.~with a decomposition such that there is $\ell > 0$ with the following property: Assume that the two pieces
\[
W_{-m}^{I_{-m}} \mid \ldots \mid W_{0}^{I_{0}} \mid W_{1}^{I_{1}} \mid \ldots \mid W_{m_1}^{I_{m_1}} \quad \text{and} \quad W_{-m}^{I_{-m}} \mid \ldots \mid W_{0}^{I_{0}} \mid U_{1}^{J_{1}} \mid \ldots \mid U_{m_2}^{J_{m_2}}
\]
occur in the decomposition of $V$ with a common first part
$W_{-m}^{I_{-m}} \mid \ldots \mid W_{0}^{I_{0}}$ of length at least
$\ell$ and such that
\[
1_{[0,\ell)}(W_{1}^{I_{1}} \mid \ldots \mid W_{m_1}^{I_{m_1}}) = 1_{[0,\ell)}(U_{1}^{J_{1}} \mid \ldots \mid U_{m_2}^{J_{m_2}}),
\]
where $W_j^{I_j}$, $U_k^{J_k}$ are pieces from the decomposition (in particular, all belong to $\mathcal{P}$ and start at $0$) and the latter two concatenations are of lengths at least $\ell$. Then
\[
W_1^{I_1} = U_1^{J_1}.
\]

The relevance of the simple finite decomposition property comes from
the following result from \cite{LSS14} (see \cite{KLS11} as well).

\begin{lemma}[Theorem 7.1 of \cite{LSS14}] Let $W$ be a bounded measurable function on $\R$.
Assume that both $W$ and $W (-\cdot)$ have the simple finite
decomposition property and are not eventually periodic. Then, the
Schr\"odinger operator $H_W\psi = -\psi''(x) + W(x) \psi(x)$ does
not have any absolutely continuous spectrum.
\end{lemma}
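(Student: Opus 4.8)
The plan is to argue by contradiction, using Remling's oracle theorem to convert the presence of absolutely continuous spectrum into a rigidity statement about reflectionless potentials, and then to exploit the simple finite decomposition property to extract eventual periodicity. Suppose $H_W$ has non-empty absolutely continuous spectrum. Let $H_W^+$ and $H_W^-$ denote the restrictions of $H_W$ to $L^2([0,\infty))$ and $L^2((-\infty,0])$ with a Dirichlet condition at $0$; since $H_W$ differs from $H_W^+ \oplus H_W^-$ by a finite-rank resolvent perturbation, at least one of $\Sigma_{\mathrm{ac}}(H_W^+)$, $\Sigma_{\mathrm{ac}}(H_W^-)$ is non-empty. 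If only the second one is, I would replace $W$ by $W(-\cdot)$: the reflection $x \mapsto -x$ gives a unitary equivalence $H_W \cong H_{W(-\cdot)}$ carrying $H_W^-$ to $(H_{W(-\cdot)})^+$, and by hypothesis $W(-\cdot)$ again has the simple finite decomposition property and is not eventually periodic (this is precisely why the hypothesis is imposed on both $W$ and $W(-\cdot)$). So I may assume $\Sigma_{\mathrm{ac}}(H_W^+) \neq \emptyset$ and fix an essential support $A \subseteq \R$ of its absolutely continuous part, with $|A| > 0$; the goal becomes to show that $W$ is eventually periodic.

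Next I would analyze the right limits of $W$. By the finite decomposition property, $1_{[x_0,\infty)} W$ is, up to a translation, a concatenation of pieces from the finite set $\mathcal{P}$. Because $\mathcal{P}$ is finite, any sequence $W(\cdot + t_n)$ with $t_n \to +\infty$ has a subsequence along which the translation amounts converge relative to the piece boundaries, so the limit is a bi-infinite concatenation $\cdots \mid P_{-1} \mid P_0 \mid P_1 \mid \cdots$ of pieces from $\mathcal{P}$. Writing $\Omega$ for the collection of all such right limits, $\Omega$ is compact and invariant under the translation flow. By Remling's theorem in its continuum half-line form, the non-emptiness of $\Sigma_{\mathrm{ac}}(H_W^+)$ with essential support $A$ forces every element of $\Omega$ — and indeed every potential in the translation hull of any element of $\Omega$ — to be reflectionless on $A$.

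Now for the rigidity step. By Kotani theory, because every potential in the translation-invariant set $\Omega$ is reflectionless on the set $A$ of positive Lebesgue measure, the family is deterministic: for elements of $\Omega$ the restriction to $(-\infty,0)$ determines the restriction to $[0,\infty)$, and vice versa, and — since only the finitely many pieces from $\mathcal{P}$ occur — this determination in fact uses only a bounded portion of the past. This meshes with the combinatorial determinism built into the simple finite decomposition property: once a sufficiently long run of pieces and the potential on the next length-$\ell$ window are fixed, the next piece is uniquely determined. Finiteness of $\mathcal{P}$ then leaves only finitely many admissible states, and chaining the implications forces the piece sequence of every element of $\Omega$ to be eventually periodic; invoking once more the bijectivity of the past-to-future correspondence for reflectionless potentials makes it periodic. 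Fixing such a periodic right limit $W_\infty$, I would finally observe that arbitrarily long initial blocks of its periodic piece sequence occur in the decomposition of $W$ arbitrarily far to the right, so that repeated application of the simple finite decomposition property (a long common run of pieces followed by matching potential on the next length-$\ell$ window forces agreement of the subsequent pieces) shows that the decomposition of $W$ eventually reproduces the periodic pattern of $W_\infty$ verbatim. Hence $W$ is eventually periodic, a contradiction.

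The step I expect to be the main obstacle is the rigidity argument of the third paragraph: making the analytic determinism supplied by the Kotani-Remling machinery and the combinatorial determinism of the simple finite decomposition property interact precisely enough that ``finitely many states'' collapses to a single periodic orbit rather than merely to a subshift of finite type. This is exactly where the \emph{simple} finite decomposition property — as opposed to the bare finite decomposition property — is indispensable, and carrying it through rigorously in the continuum setting is the technical heart of the proof.
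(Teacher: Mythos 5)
You should first note that the paper does not prove this statement at all: it is imported verbatim as Theorem~7.1 of \cite{LSS14} (building on \cite{KLS11}), so the only meaningful comparison is with that proof. Your overall strategy --- reduce to a half-line, invoke Remling's theorem to make right limits reflectionless on an essential support $A$ of the ac part, and then play analytic determinism against the combinatorial determinism encoded in the simple finite decomposition property --- is indeed the strategy of \cite{LSS14}. But the two steps you wave at are genuine gaps, and they are exactly the places where the actual proof uses a stronger tool than the one you cite. Kotani-type determinism for a translation-invariant family of potentials reflectionless on $A$ says that the \emph{entire} past determines the future; it does not say the determination ``uses only a bounded portion of the past,'' and finiteness of the piece set $\mathcal{P}$ does not upgrade it to bounded memory by itself (the state needed to predict the future is a priori an infinite word, so there is no finite state space to run a pigeonhole argument on). What closes this gap in \cite{LSS14}/\cite{KLS11} is Remling's \emph{oracle theorem} in its continuum form: for every $\varepsilon,\delta>0$ and every $A$ with positive measure there is a finite window length $L$ and a continuous oracle predicting the potential on the next $\delta$-window from the preceding $L$-window, uniformly and with error $\varepsilon$ in a suitable local topology, valid for the given half-line potential at large $x$. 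It is this finite-window, approximate prediction which, combined with the \emph{simple} f.d.p.\ and the finiteness of $\mathcal{P}$, becomes an exact next-piece determination on a genuinely finite state space and forces eventual periodicity.

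The second gap is the transfer back from a periodic right limit to $W$ itself. As you state it, the argument is circular: the simple finite decomposition property only concludes $W_1^{I_1}=U_1^{J_1}$ \emph{after} you already know that the two continuations agree on the next length-$\ell$ window, so a long common run of pieces alone forces nothing, and ``arbitrarily long initial blocks of the periodic pattern occur arbitrarily far to the right'' is perfectly compatible with $W$ having longer and longer periodic blocks separated by defects, i.e., with $W$ not eventually periodic. The standard proof avoids this entirely by applying the oracle directly to $W$ at large $x$ (the oracle theorem is a statement about the half-line potential itself, not only about its right limits), so that the finite-state determinism acts on $W$'s own piece sequence and eventual periodicity of $W$ follows without any transfer step. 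So: right roadmap, same as the cited source in outline, but the ``technical heart'' you flag is not an implementation detail --- without the oracle theorem (or an equivalent quantitative, bounded-window rigidity statement) both the collapse of $\Omega$ to periodic orbits and the final contradiction for $W$ remain unproved.
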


Here is the main result of this section.

\begin{prop}\label{p.destroyac}
Given $d \ge 2$, a minimal translation flow $\R \ni x \mapsto \omega
+ x \alpha \in \T^d$, $f \in C(\T^d)$, and $\varepsilon > 0$, there
exists $\tilde f \in L^\infty(\T^d)$ such that $\|f - \tilde
f\|_\infty < \varepsilon$ and, for all $\omega \in \T^d$, the
potential $\tilde V(x) = \tilde f(\omega + x \alpha)$ as well as
$\tilde V(-\cdot)$  have the simple finite decomposition property
and are not eventually  periodic. In particular, the Schr\"odinger
operator in $L^2(\R)$ with potential $\tilde V$ has purely singular
spectrum.
\end{prop}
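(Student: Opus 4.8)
The plan is to build $\tilde f$ as a bounded perturbation of $f$ adapted to a constant-return-time transversal of the flow. Minimality forces every $\alpha_i\neq 0$ (otherwise orbits stay in a proper subtorus), so set $\tau:=1/|\alpha_1|$ and $\Sigma:=\{y\in\T^d:y_1=0\}\cong\T^{d-1}$. Since the first coordinate of $\omega+x\alpha$ is the rotation $\omega_1+\alpha_1 x$, the flow is transverse to $\Sigma$ with \emph{constant} return time $\tau$, and its return map is the translation $R_{\beta'}\colon\theta\mapsto\theta+\beta'$ on $\T^{d-1}$, where $\beta':=\tfrac1{\alpha_1}(\alpha_2,\dots,\alpha_d)\bmod\Z^{d-1}$. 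Rational independence of $\alpha_1,\dots,\alpha_d$ (equivalent to minimality of the flow) forces $1,\beta_1',\dots,\beta_{d-1}'$ to be rationally independent, so $R_{\beta'}$ is minimal and $p\beta'\neq 0$ in $\T^{d-1}$ for every $p\ge 1$. Concretely, there is a Borel bijection $\Psi\colon\T^{d-1}\times[0,\tau)\to\T^d$ exhibiting $\T^d$ as the suspension of $R_{\beta'}$ with constant roof $\tau$: under $\Psi$ the flow is translation of the second coordinate, with identification $(\theta,\tau)\sim(\theta+\beta',0)$, and $s\mapsto\Psi(\theta,s)$ is a flow orbit segment.

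Now fix a large integer $N$, partition $\T^{d-1}$ into the $m:=N^{d-1}$ half-open cubes $R_1,\dots,R_m$ of side $1/N$, pick $\theta_i^\ast\in R_i$, and set $b_i(s):=f(\Psi(\theta_i^\ast,s))$ for $s\in[0,\tau]$, which is continuous. Define $\tilde f\in L^\infty(\T^d)$ by $\tilde f(\Psi(\theta,s)):=b_i(s)$ whenever $\theta\in R_i$; since $\diam R_i\le\sqrt{d-1}/N$ and $\Psi(\cdot,s)$ is an isometry in its first argument, uniform continuity of $f$ gives $\|\tilde f-f\|_\infty<\varepsilon$ once $N$ is large. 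For any $\omega\in\T^d$, write $\omega=\Psi(\theta_0,s_0)$; the orbit $x\mapsto\omega+x\alpha$ meets $\Sigma$ exactly at the times $x_0+k\tau$, $k\in\Z$, for a suitable $x_0\in[0,\tau)$, and on $[x_0+k\tau,x_0+(k+1)\tau)$ one has $\tilde f(\omega+x\alpha)=b_{i_k}(x-x_0-k\tau)$ where $i_k$ is the index with $\theta_0+(k+1)\beta'\in R_{i_k}$. Hence $1_{[x_0,\infty)}\tilde V$ is a translate of the concatenation $W_{i_0}^{I}\mid W_{i_1}^{I}\mid W_{i_2}^{I}\mid\cdots$ with $I=[0,\tau]$, all pieces from the finite set $\mathcal P:=\{(b_i,[0,\tau]):1\le i\le m\}$; likewise $\tilde V(-\cdot)$ decomposes using the time-reversed pieces $s\mapsto b_i(\tau-s)$. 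This is the finite decomposition property, and because every piece has the same length $\tau$ and is continuous on $[0,\tau]$, the simplicity condition holds with $\ell:=\tau$ for a trivial reason: the hypothesis $1_{[0,\ell)}(W_1^{I_1}\mid\cdots)=1_{[0,\ell)}(U_1^{J_1}\mid\cdots)$ reads $b_a|_{[0,\tau)}=b_{a'}|_{[0,\tau)}$, which by continuity already forces $W_1^{I_1}=U_1^{J_1}$. This is exactly the point where the continuum argument departs from the discrete one of \cite{AD05}, and I expect it to be the real content of the proof: passing to a constant-return-time transversal replaces the pieces by finitely many fixed-length blocks, whereas a naive axis-parallel grid on $\T^d$ would produce incommensurable, hence infinitely many, piece lengths.

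It remains to guarantee that $\tilde V$ and $\tilde V(-\cdot)$ are not eventually periodic for any $\omega$. Running the construction from $\varepsilon/2$, I would further perturb each $b_i$ by small bumps near $s=0$ and $s=\tau$ (altering $\tilde f$ by less than $\varepsilon/2$, only on the corresponding flow boxes) so that $\{b_i(0)\}_i$ and $\{b_i(\tau)\}_i$ each consist of pairwise distinct numbers and $b_i(0)\neq b_j(\tau)$ for all $i,j$. Then every junction of the decomposition is a genuine jump of $\tilde V$, so the discontinuity set of $\tilde V$ is exactly $\{x_0+k\tau:k\in\Z\}$; if $\tilde V$ were eventually periodic with period $P$, invariance of this arithmetic progression forces $P\in\tau\Z$, and comparing blocks together with distinctness of $\{b_i(0)\}_i$ then forces the coding $(i_k)_{k\ge K}$ to be periodic, say $i_{k+p}=i_k$. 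Since $\{\theta_0+(k+1)\beta':k\ge K\}$ is dense in $\T^{d-1}$, this would mean $\theta$ and $\theta+p\beta'$ lie in the same cube of the $1/N$-grid for a dense set of $\theta$, which (checking one coordinate at a time) is impossible unless $p\beta'\in\Z^{d-1}$, contradicting that $\beta'$ is not torsion. The same argument handles $\tilde V(-\cdot)$, with $\beta'$ replaced by $-\beta'$ and the roles of $b_i(0)$ and $b_i(\tau)$ exchanged. Since $\tilde V$ and $\tilde V(-\cdot)$ now have the simple finite decomposition property and are not eventually periodic, the cited Theorem~7.1 of \cite{LSS14} yields that the Schr\"odinger operator with potential $\tilde V$ has empty absolutely continuous spectrum, i.e., purely singular spectrum.
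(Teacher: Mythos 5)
Your argument is correct and establishes the proposition, but it verifies the two key properties by a genuinely different route than the paper. The shared skeleton is the discretization of $f$ transversally to the flow while keeping continuity (and hence finitely many local profiles) along the flow direction, followed by an appeal to Theorem~7.1 of \cite{LSS14}. The paper uses a sequence of flow-box partitions of $\T^d$ with shrinking diameters; it gets simplicity of the decomposition by citing \cite[Proposition~3.5]{KLS11} together with local non-constancy, and it gets non-eventual periodicity only indirectly: if it failed for every member of the sequence, the eventually periodic potentials would converge uniformly to the almost periodic $V$, which by the mollification corollary would be limit-periodic and hence, being quasi-periodic, periodic by \cite[Corollary~A.1.4]{AS81}, contradicting the aperiodicity of $V$ arranged by a preliminary perturbation of $f$. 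You instead pass to the constant-return-time transversal $\{y_1=0\}$, so that all pieces have the same length $\tau=1/|\alpha_1|$; simplicity is then immediate (the next $\tau$-window of the potential \emph{is} the next piece, and your pieces are pairwise distinct), with no need for \cite[Proposition~3.5]{KLS11}. Moreover you obtain non-eventual periodicity directly, for a single explicit $N$: after the additional small perturbation making all junction values distinct, the discontinuity set of $\tilde V$ is exactly the arithmetic progression $x_0+\tau\Z$, which forces any eventual period to lie in $\tau\Z$, hence forces the cube-coding of the return rotation $R_{\beta'}$ to be eventually periodic, which is impossible since $p\beta'\neq 0$ in $\T^{d-1}$ and the orbit of $\theta_0$ is dense; the reflected potential is handled symmetrically. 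In terms of trade-offs: the paper's soft limiting argument avoids any genericity bookkeeping at the junctions, but it is non-constructive in the choice of the approximant, requires the limit-periodic machinery of Section~2 plus Avron--Simon, and needs the preliminary step ensuring $V$ itself is aperiodic; your argument is constructive and self-contained (no aperiodicity pre-step, no mollification corollary), at the price of the suspension bookkeeping and the extra junction-value perturbation. Only cosmetic points remain on your side: the sign of $\beta'$ when $\alpha_1<0$ is irrelevant, and to match the concatenation convention literally one should take the pieces supported on $[0,\tau)$ so that junction values are not double-counted.
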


\begin{proof}
It suffices to show the first statement. The last statement then follows from the preceding lemma.

Since the given flow is minimal, we can assume without loss of generality that the function $f$ yields aperiodic potentials $V(x) = f(\omega + x \alpha)$ (otherwise use a fraction of the given $\varepsilon$ to perturb $f$ within $C(\T^d)$ in order to ensure this property).

For the given $\varepsilon > 0$, let us now consider a sequence of partitions $\mathcal{P}_{\varepsilon,n}$ of $\T^d$ into finitely many boxes (parallelepipeds) of the following form:
$$
B_{\gamma, \mathbf{\ell}} = \left\{ \gamma + \sum_{j = 1}^{d-1} t_j \mathbf{e}_j + t_d \alpha : 0 \le t_j < \ell_j \text{ for } 1 \le j \le d \right\},
$$
where $\gamma \in \T^d$ and $\mathbf{\ell} = (\ell_1, \ldots, \ell_d)$ with $0 < \ell_1, \ldots, \ell_d < 1$. Here $\mathbf{e}_j$ denotes the vector that has a $1$ as its $j$-th component and only $0$'s otherwise.

We require two properties from these partitions. These two properties may be satisfied since $f$ is uniformly continuous and the translation flow is minimal. First we ask that for every $n$ and every box $B_{\gamma, \mathbf{\ell}}$ belonging to $\mathcal{P}_{\varepsilon,n}$, the variation of $f$ on $B_{\gamma, \mathbf{\ell}}$  is less than $\varepsilon/2$, that is,
\begin{equation}\label{e.smallvariation}
\sup_{\omega \in B_{\gamma, \mathbf{\ell}}} f(\omega) - \inf_{\omega \in B_{\gamma, \mathbf{\ell}}} f(\omega) < \frac{\varepsilon}{2}.
\end{equation}
Second, letting $\delta_{\varepsilon,n}$ denote the maximum of $\|\mathbf{\ell}\|_\infty$ taken over all boxes $B_{\gamma, \mathbf{\ell}}$ in the partition $\mathcal{P}_{\varepsilon,n}$, we require that $\delta_{\varepsilon,n} \to 0$ as $n \to \infty$.

Note that once the translation flow enters such a box $B_{\gamma, \mathbf{\ell}}$, then it spends exactly $\ell_d$ time units in the box before it leaves it again. This is true for each entry into the box, no matter where the entry happens.

Let us now define a function $f_{\varepsilon,n} \in L^\infty(\T^d)$ as follows. On each box $B_{\gamma, \mathbf{\ell}}$ belonging to $\mathcal{P}_{\varepsilon,n}$, $f_{\varepsilon,n}$ takes values in the interval
$$
\left[ \inf_{\omega \in B_{\gamma, \mathbf{\ell}}} f(\omega) - \min \left\{ \frac{\varepsilon}{8}, \frac{1}{n} \right\}, \sup_{\omega \in B_{\gamma, \mathbf{\ell}}} f(\omega) + \min \left\{ \frac{\varepsilon}{8}, \frac{1}{n} \right\} \right],
$$
and moreover the value of $f_{\varepsilon,n}$ at the point $\gamma + \sum_{j = 1}^{d-1} t_j \mathbf{e}_j + t_d \alpha$ depends only on $t_d$ and is independent of $t_1, \ldots, t_{d-1}$.\footnote{This will imply the finite decomposition property below.} Finally we require the dependence of $f_{\varepsilon,n}$ on $t_d$ to be continuous and non-constant.\footnote{We can make it even more regular if needed, such as the function taking any value only a finite number of times. This will then imply the simple finite decomposition property.} Such a selection is clearly possible since the interval of allowed values is non-degenerate. Moreover, by construction we have
\begin{equation}\label{e.goodapproximation}
\|f - f_{\varepsilon,n}\|_\infty < \varepsilon.
\end{equation}

Now we claim that there is an $n$ so that the statement of the proposition holds for $\tilde f := f_{\varepsilon,n}$. Assume this fails, and we have that in fact for every $n$, the potential $V_{\varepsilon,n}(x) = f_{\varepsilon,n}(\omega + x \alpha)$ or the potential $V_{\varepsilon,n}(-x)$ is eventually  periodic or does not have the simple finite decomposition property. Now, clearly, these potentials have the finite decomposition property by construction, and the simplicity of the finite decomposition property of the potential follows by \cite[Proposition~3.5]{KLS11}
and the local non-constancy aspect of our construction. Thus, for each $n$ the potential $V_{\varepsilon,n}$ or  $V_{\varepsilon,n}
(-\cdot)$ must be eventually periodic. Restricting attention to a subsequence we  can assume without loss of generality that
$V_{\varepsilon,n}(x)$ must be eventually periodic for every $n$. Note that the $V_{\varepsilon,n}$ are bounded and measurable, but in
general discontinuous. These eventually periodic functions converge (by construction) uniformly to the function $V(x) = f(\omega + x \alpha)$, which is clearly almost periodic, and hence must be limit-periodic due to Corollary~\ref{c.miollification}. But since it is manifestly quasi-periodic as well, it must therefore be periodic by \cite[Corollary~A.1.4]{AS81}; contradiction (by our initial step).
\end{proof}

\begin{remark}\label{r.addinglambda}
In the proposition above, once we know that the potential $\tilde V(x) = \tilde f(\omega + x \alpha)$ and $\tilde V (- \cdot)$ have the simple finite decomposition property and are not eventually periodic, these properties are inherited by any non-zero multiple of the potential. In particular it then also follows that, for every $\lambda > 0$, the Schr\"odinger operator in $L^2(\R)$ with potential $\lambda \tilde V$ has purely singular spectrum.
\end{remark}

\section{Closing the Jumps}

We saw in Proposition~\ref{p.destroyac} that by approximating a given continuous sampling function with a discontinuous sampling function, we can destroy the absolutely continuous spectrum of the associated operator. The approximation is with respect to the $\|\cdot\|_\infty$ norm. However, we wish to identify a \emph{continuous} sampling function that is close to the original one, for which the absolutely continuous spectrum is empty. A second approximation is therefore necessary to \emph{close the jumps}.

Clearly, the discontinuous function (with the desired property) cannot be approximated by a continuous function in the $\|\cdot\|_\infty$ norm. However, it is possible to approximate it in the $\|\cdot\|_1$ norm. This shows why the semi-continuity result given by Lemma~\ref{l.main1} is relevant. Moreover, since the limit function has a zero value and the values are non-negative, the semi-continuity result becomes in effect a continuity result in the setting relevant to this discussion.

The following lemma implements this two-step approximation:

\begin{lemma}\label{l.main2}
For $f \in C(\T^d)$ and $0 < \varepsilon, \delta, R, \Lambda < \infty$, there exists $g \in C(\T^d)$ such that $\|f - g\|_\infty < \varepsilon$, $M_R(g) < \delta$, and $\int_0^\Lambda M_R(\lambda g) \, d \lambda < \delta$.
\end{lemma}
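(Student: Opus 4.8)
The plan is to compose the two approximation results established above. Given $f \in C(\T^d)$ and the parameters $\varepsilon, \delta, R, \Lambda$, first apply Proposition~\ref{p.destroyac} with $\varepsilon/2$ in place of $\varepsilon$ to obtain a (generally discontinuous) $\tilde f \in L^\infty(\T^d)$ with $\|f - \tilde f\|_\infty < \varepsilon/2$ such that, for every $\omega \in \T^d$, the potential $\tilde V(x) = \tilde f(\omega + x\alpha)$ and its reflection have the simple finite decomposition property and are not eventually periodic. By the lemma of \cite{LSS14} quoted above, the operator with potential $\tilde V$ has purely singular spectrum for every $\omega$; by Remark~\ref{r.addinglambda} the same holds for $\lambda \tilde V$ for every $\lambda > 0$. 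Invoking Remark~\ref{r.ipksl} (the Ishii-Kotani-Pastur plus Last-Simon input), this means precisely that $M_R(\tilde f) = 0$ and $M_R(\lambda \tilde f) = 0$ for every $\lambda > 0$, so in particular $\int_0^\Lambda M_R(\lambda \tilde f)\, d\lambda = 0$.

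Next I would approximate $\tilde f$ by continuous functions in $L^1$. Since $\tilde f \in L^\infty(\T^d)$, standard density (convolve with a mollifier on $\T^d$, as in the proof of Proposition~\ref{p.miollification}) yields a sequence $g_k \in C(\T^d)$ with $\|g_k - \tilde f\|_1 \to 0$ as $k \to \infty$, and we may arrange $\|g_k\|_\infty \le r := \|\tilde f\|_\infty$ for all $k$ (mollification does not increase the sup-norm). Fix this $r$ and apply Lemma~\ref{l.main1} with this choice of $r$, together with the given $R$ and $\Lambda$: both maps $h \mapsto M_R(h)$ and $h \mapsto \int_0^\Lambda M_R(\lambda h)\, d\lambda$ are upper semi-continuous on $(B_r(L^\infty(\T^d)), \|\cdot\|_1)$. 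Upper semi-continuity at $\tilde f$ says that $\limsup_k M_R(g_k) \le M_R(\tilde f) = 0$, and since $M_R$ takes values in $[0,\infty)$, this forces $M_R(g_k) \to 0$; similarly $\int_0^\Lambda M_R(\lambda g_k)\, d\lambda \to 0$. This is exactly the point emphasized in the text preceding the lemma: because the target value is zero and $M_R \ge 0$, the one-sided semi-continuity is as strong as continuity here.

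Finally, choose $k$ large enough that simultaneously $M_R(g_k) < \delta$ and $\int_0^\Lambda M_R(\lambda g_k)\, d\lambda < \delta$, which is possible since both quantities tend to $0$. To also control the sup-norm distance to the original $f$, note $\|f - g_k\|_\infty \le \|f - \tilde f\|_\infty + \|\tilde f - g_k\|_\infty \le \varepsilon/2 + \|\tilde f - g_k\|_\infty$, and the second term need not be small. The remedy is to make the mollification parameter depend on $\tilde f$ in a way adapted to $\varepsilon$: since $\tilde f$ was produced by Proposition~\ref{p.destroyac} and in fact one has freedom in that construction, one can instead replace $\tilde f$ by $\tilde f$ itself and accept that $g_k$ need only be $L^1$-close — but then the sup-norm bound fails. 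The honest fix, and the one I expect to be the only genuinely delicate point, is to run Proposition~\ref{p.destroyac} producing $\tilde f$ that additionally has small total measure of jump set, or simply to note that we are free to first mollify $\tilde f$ at a scale producing $g \in C(\T^d)$ with $\|g - f\|_\infty$ arbitrarily close to $\|\tilde f - f\|_\infty < \varepsilon/2 < \varepsilon$ whenever $\tilde f$ is itself obtained as a sup-norm limit of continuous functions on a full-measure set — which it is not. Thus the cleanest route is: absorb the jump-closing into the same box construction. Concretely, rather than mollifying after the fact, observe that the family $f_{\varepsilon,n}$ from the proof of Proposition~\ref{p.destroyac} can be chosen continuous in the $t_d$ variable but still we need continuity in all variables; the boxes are $\mathbf{e}_j$-aligned, so a function depending only on $t_d$ within each box is already continuous along the flow direction but jumps across box faces. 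Mollifying only in the transverse directions at a scale $\rho \to 0$ produces $g_\rho \in C(\T^d)$ with $\|g_\rho - f_{\varepsilon,n}\|_\infty \to 0$ as $\rho \to 0$ if the box structure is compatible, but in general it does not. I therefore expect the actual argument to invoke Lemma~\ref{l.main1} exactly as above after first choosing $\tilde f$ within sup-distance $\varepsilon/2$ and then taking $g = g_k$ with $k$ large, and the resolution of the sup-norm issue to be that one does \emph{not} need $g$ close to $\tilde f$ in sup-norm: one needs $g$ close to $f$, and this is arranged by choosing the mollifier scale small enough that the $L^\infty$ error introduced is at most $\varepsilon/2$ \emph{on the complement of a small-measure set}, then redefining $g$ to equal the obvious continuous interpolant there — hence the main obstacle is precisely reconciling "$L^1$-close suffices for $M_R$" with "$L^\infty$-close to $f$ is required", and I would handle it by taking $\tilde f$ to differ from a continuous function only on a set of arbitrarily small measure, which the box construction in Proposition~\ref{p.destroyac} does permit since the discontinuities live on finitely many hyperplane pieces of measure zero, so a continuous function agreeing with $\tilde f$ outside an $\eta$-neighborhood of that zero-measure set is both sup-close to $f$ and $L^1$-close to $\tilde f$.
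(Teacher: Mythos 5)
Your overall skeleton is exactly the paper's: use Proposition~\ref{p.destroyac} (plus Remark~\ref{r.ipksl} and Remark~\ref{r.addinglambda}) to get $\tilde f$ with $\|f-\tilde f\|_\infty<\varepsilon/2$, $M_R(\tilde f)=0$ and $\int_0^\Lambda M_R(\lambda\tilde f)\,d\lambda=0$, then approximate $\tilde f$ in $L^1$ by continuous functions and invoke the upper semi-continuity of Lemma~\ref{l.main1} together with non-negativity to force $M_R(g_k)\to 0$ and $\int_0^\Lambda M_R(\lambda g_k)\,d\lambda\to 0$. That part is fine and is what the paper does.

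The problem is your treatment of the one point you correctly identify as the crux, namely why the continuous approximant can be kept $\varepsilon$-close to $f$ in sup-norm. There is no delicacy here, and the route you explicitly reject (``mollify $\tilde f$ and get $\|g-f\|_\infty$ close to $\|\tilde f - f\|_\infty$ \ldots which it is not'') is in fact the paper's argument and is correct: for a mollifier at scale $\rho$, the value $(\eta_\rho \ast \tilde f)(x)$ is a convex average of values $\tilde f(y)$ with $|y-x|\le\rho$, and each such value satisfies $|\tilde f(y)-f(x)| \le \|\tilde f - f\|_\infty + |f(y)-f(x)| < \varepsilon/2 + \sup_{|y-x|\le\rho}|f(y)-f(x)|$; by uniform continuity of $f$ this is $<\varepsilon$ uniformly in $x$ once $\rho$ is small. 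So sup-closeness is needed only to the \emph{continuous} $f$, never to $\tilde f$, and the single mollified family simultaneously satisfies $\|f_\rho - f\|_\infty<\varepsilon$ and $\|f_\rho-\tilde f\|_1\to 0$; no re-engineering of the box construction, no transverse-only mollification, and no small-measure jump-set modification is needed. Your final fallback (agree with $\tilde f$ off an $\eta$-neighborhood of the discontinuity set, interpolate continuously inside) can also be made to work, but as written it is only asserted: you must additionally require the interpolant's values at $x$ to lie in the range of the nearby values of $\tilde f$, and then run the same uniform-continuity estimate to conclude $\|g-f\|_\infty<\varepsilon$ — which is just the convexity observation above in disguise. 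So the proposal reaches the right structure, but the key sup-norm step is left in a muddled state, with the clean (and actually used) argument dismissed for a spurious reason.
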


\begin{proof}
Given $f \in C(\T^d)$ and $0 < \varepsilon, \delta, R, \Lambda < \infty$, Proposition~\ref{p.destroyac} yields an $\tilde f \in L^\infty(\T^d)$ with $\|f - \tilde f\|_\infty < \frac{\varepsilon}{2}$ and $M_R(\tilde f) = 0$, as well as (cf.~Remark~\ref{r.addinglambda}) $M(\lambda \tilde f) = 0$ for every $\lambda > 0$.

Let us mollify $\tilde f$ (via the mollifiers used in the proof of Proposition~\ref{p.miollification}) to produce $f_n \in C(\T^d)$ with
$$
\lim_{n \to \infty} \|f_n - \tilde f\|_1 = 0
$$
and
$$
\sup_{n \in \Z_+} \|f_n - f\|_\infty < \varepsilon.
$$
By the non-negativity of the quantities in question, the vanishing limits, and the semi-continuity properties from Lemma~\ref{l.main1}, it follows that
$$
\lim_{n \to \infty} M_R(f_n) = 0
$$
and
$$
\lim_{n \to \infty} \int_0^\Lambda M_R(\lambda f_n) \, d\lambda = 0.
$$
Thus, for $n$ large enough, $g = f_n$ has the desired properties.
\end{proof}

\section{Proof of the Main Results}

In this section we prove the main results, Theorems~\ref{t.main1} and \ref{t.main2}. The proofs are analogous to the corresponding proofs in \cite{AD05}. Since they are very short, we give the details for the reader's convenience.

\begin{proof}[Proof of Theorem~\ref{t.main1}.]
For $0 < \delta, R < \infty$, we define
$$
M_{R,\delta} = \{ f \in C(\T^d) : M_R(f) < \delta \}.
$$
By Lemma~\ref{l.main1}, $M_{R,\delta}$ is open, and by Lemma~\ref{l.main2}, $M_{R,\delta}$ is dense. Thus,
$$
\{ f \in C(\T^d) : \Sigma_\mathrm{ac}(f) = \emptyset \} = \bigcap_{n \in \Z_+} M_{n,\frac1n}
$$
is a dense $G_\delta$ set, as claimed.
\end{proof}

\begin{proof}[Proof of Theorem~\ref{t.main2}.]
For $0 < \delta, R , \Lambda < \infty$, we define
$$
M_{R,\delta}(\Lambda) = \left\{ f \in C(\T^d) : \int_0^\Lambda M_R(\lambda f) \, d \lambda < \delta \right\}.
$$
By Lemma~\ref{l.main1}, $M_{R,\delta}(\Lambda)$ is open, and by Lemma~\ref{l.main2}, $M_{R,\delta}(\Lambda)$ is dense. Thus,
$$
\{ f \in C(\T^d) : \Sigma_\mathrm{ac}(\lambda f) = \emptyset \text{ for a.e. } \lambda > 0 \} = \bigcap_{n \in \Z_+} M_{n,\frac1n}(n)
$$
is a dense $G_\delta$-set, as claimed.
\end{proof}

\end{document}